\newcommand{\noun}[1]{\textsc{#1}}
\theoremstyle{plain}
\newtheorem{thm}{\protect\theoremname}
\theoremstyle{definition}
\newtheorem{problem}[thm]{\protect\problemname}
\theoremstyle{remark}
\newtheorem*{claim*}{\protect\claimname}
\theoremstyle{plain}
\newtheorem{lem}[thm]{\protect\lemmaname}
\DeclareRobustCommand{\wonwon}{\mathbin{\text{\textwon}}}
\newcommand{\mmesh}{\mathrel{\#}}
\providecommand{\claimname}{Claim}
\providecommand{\lemmaname}{Lemma}
\providecommand{\problemname}{Problem}
\providecommand{\theoremname}{Theorem}
\begin{document}
\global\long\def\frak#1{\mathfrak{#1}}%
\global\long\def\cal#1{\mathcal{#1}}%
\global\long\def\surr#1{{#1}}%
\global\long\def\t#1{\mathrm{#1}}%
\global\long\def\of#1{{\left(#1\right)}}%
\global\long\def\size#1{\left\Vert #1\right\Vert }%
\global\long\def\bof#1{{\left[#1\right]}}%
\global\long\def\lsup#1#2{{}{}^{#1}{#2}}%
\global\long\def\res{{\restriction\,}}%

\global\long\def\super{\supseteq}%
\global\long\def\subnq{\subsetneqq}%
\global\long\def\all{\forall}%
\global\long\def\exi{\exists}%
\global\long\def\sub{\subseteq}%
\global\long\def\empa{\varnothing}%
\global\long\def\sem{\smallsetminus}%
\global\long\def\ity{\infty}%
\global\long\def\emp{\varnothing}%

\global\long\def\bcup{\bigcup}%
\global\long\def\bcap{\bigcap}%
\global\long\def\and{\wedge}%
\global\long\def\orr{\vee}%
\global\long\def\then{\Rightarrow}%
\global\long\def\tm{\times}%
\global\long\def\isom{\cong}%
\global\long\def\ds{\dots}%
\global\long\def\conc{^{\frown}}%
\global\long\def\nin{\notin}%
\global\long\def\mto{\mapsto}%
\global\long\def\cont{\frak c}%
\global\long\def\sym{\bigtriangleup}%
\global\long\def\clb#1{\overline{#1}}%
\global\long\def\cl{\mathrm{cl}}%
\global\long\def\clm{\mathrm{cl}_{M}}%
\global\long\def\cln{\mathrm{cl}_{N}}%
\global\long\def\clof#1{\mathrm{cl}\left(#1\right)}%

\global\long\def\brp{\mathbb{R}^{+}}%
\global\long\def\br{\mathbb{R}}%
\global\long\def\bz{\mathbb{Z}}%
\global\long\def\bq{\mathbb{Q}}%
\global\long\def\bc{\mathbb{C}}%
\global\long\def\bn{\mathbb{N}}%
\global\long\def\bg{\mathbb{G}}%
\global\long\def\ba{\mathbb{A}}%
\global\long\def\bd{\mathbb{D}}%
\global\long\def\bp{\mathbb{P}}%
\global\long\def\bu{\mathbb{U}}%
\global\long\def\bl{\mathbb{L}}%
\global\long\def\bh{\mathbb{H}}%

\global\long\def\gw{\omega}%
\global\long\def\ga{\alpha}%
\global\long\def\gb{\beta}%
\global\long\def\gd{\delta}%
\global\long\def\ph{\varphi}%
\global\long\def\gga{\gamma}%
\global\long\def\gt{\theta}%
\global\long\def\gz{\zeta}%
\global\long\def\gk{\kappa}%
\global\long\def\gs{\sigma}%
\global\long\def\gl{\lambda}%
\global\long\def\gx{\xi}%
\global\long\def\gp{\pi}%
\global\long\def\eps{\varepsilon}%
\global\long\def\th{\vartheta}%
\global\long\def\del{\Delta}%
\global\long\def\gam{\Gamma}%

\global\long\def\Ord{\text{Ord}}%
\global\long\def\dom{\text{dom}}%
\global\long\def\cof{\text{cf}}%
\global\long\def\ran{\text{ran}}%
\global\long\def\im{\text{im}}%
\global\long\def\ord{\text{On}}%

\global\long\def\abs{\left|\,\right|}%
\global\long\def\mr{\diagdown}%
\global\long\def\mc{\diagup}%
\global\long\def\sl{\cal L}%
\global\long\def\sc{\cal C}%

\global\long\def\Bs{\mathbf{\Sigma}}%
\global\long\def\Bp{\mathbf{\Pi}}%

\global\long\def\cc{\Gamma}%
\global\long\def\dd{}%
\global\long\def\rs#1{{\restriction_{#1}}}%

\global\long\def\abv#1{\left|#1\right|}%
\global\long\def\set#1{\left\{  #1\right\}  }%
\global\long\def\cur#1{\left(#1\right)}%
\global\long\def\cei#1{\left\lceil #1\right\rceil }%
\global\long\def\ang#1{\left\langle #1\right\rangle }%
 
\global\long\def\bra#1{\left[#1\right]}%

\global\long\def\se{\mathscr{E}}%
\global\long\def\sc{\mathscr{C}}%
\global\long\def\sp{\mathscr{P}}%
\global\long\def\sb{\mathscr{B}}%
\global\long\def\sa{\mathscr{A}}%
\global\long\def\ss{\mathscr{S}}%
\global\long\def\sf{\mathscr{F}}%
\global\long\def\sx{\mathscr{X}}%
\global\long\def\sh{\mathscr{H}}%
\global\long\def\su{\mathscr{U}}%
\global\long\def\so{\mathscr{O}}%
\global\long\def\sy{\mathscr{Y}}%
\global\long\def\sm{\mathscr{M}}%
\global\long\def\sn{\mathscr{N}}%
\global\long\def\sd{\mathscr{D}}%
\global\long\def\sl{\mathscr{L}}%
\global\long\def\sg{\mathscr{G}}%
\global\long\def\sk{\mathscr{K}}%
\global\long\def\sv{\mathscr{V}}%
\global\long\def\sw{\mathscr{W}}%
\global\long\def\st{\mathscr{T}}%
\global\long\def\si{\mathscr{I}}%
\global\long\def\sj{\mathscr{J}}%
\global\long\def\sr{\mathscr{R}}%

\global\long\def\df#1{\boldsymbol{#1}}%
\global\long\def\der#1{\overset{\centerdot}{#1}}%
\global\long\def\ocl#1{\left(#1\right]}%
\global\long\def\clo#1{\left[#1\right)}%
\global\long\def\iv#1{\text{Iv}\left(#1\right)}%
\global\long\def\cless#1{\text{cless}\left(#1\right)}%
\global\long\def\dens#1{\text{dens}\left(#1\right)}%
\global\long\def\fr#1#2{\frac{#1}{#2}}%
\global\long\def\mus{\mu^{*}}%
\global\long\def\srs{\sr^{*}}%
\global\long\def\rec#1{\frac{1}{#1}}%
\global\long\def\seqk#1{\left(#1\right)_{k\in\bn}}%
\global\long\def\seqi#1{\left(#1\right)_{i\in\bn}}%
\global\long\def\limk#1{\lim_{k\to\ity}#1_{k}}%
\global\long\def\back{\Leftarrow}%

\global\long\def\lisk#1{k=1,2,\dots,#1}%
\global\long\def\lisi#1{i=1,2,\dots,#1}%
\global\long\def\lisj#1{j=1,2,\dots,#1}%

\global\long\def\cupk#1{\bigcup_{k=1}^{#1}}%
\global\long\def\cupi#1{\bigcup_{i=1}^{#1}}%
\global\long\def\cupj#1{\bigcup_{j=1}^{#1}}%

\global\long\def\capk#1{\bigcap_{k=1}^{#1}}%
\global\long\def\capi#1{\bigcap_{i=1}^{#1}}%
\global\long\def\capj#1{\bigcap_{j=1}^{#1}}%

\global\long\def\seqk#1{\left(#1\right)_{k\in\bn}}%
\global\long\def\seqi#1{\left(#1\right)_{i\in\bn}}%
\global\long\def\seqj#1{\left(#1\right)_{j\in\bn}}%

\global\long\def\limk{\lim_{k\to\ity}}%
\global\long\def\limi{\lim_{i\to\ity}}%
\global\long\def\limj{\lim_{j\to\ity}}%

\global\long\def\sumi#1{\sum_{i=1}^{#1}}%
\global\long\def\sumk#1{\sum_{k=1}^{#1}}%
\global\long\def\sumj#1{\sum_{j=1}^{#1}}%

\global\long\def\qand{\qquad\text{and}\qquad}%
\global\long\def\seg{\left[0,1\right]}%
\global\long\def\bzp{\mathbb{Z}^{+}}%
\global\long\def\ovl#1{\overline{#1}}%
\global\long\def\ome{\Omega}%

\global\long\def\sfl{\mathsf{L}}%
\global\long\def\dof#1{\text{dist}\left(#1\right)}%
\global\long\def\pt{\partial}%
\global\long\def\od{\odot}%
\global\long\def\om{\ominus}%
\global\long\def\nm#1{\left\Vert #1\right\Vert }%

\global\long\def\part#1#2#3{\left\{  \left(#1{}_{1},#2{}_{1}\right),\ds,\left(#1_{#3},#2_{#3}\right)\right\}  }%
\global\long\def\parta#1{\left\{  \left(A_{1},x{}_{1}\right),\ds,\left(A_{#1},x_{#1}\right)\right\}  }%
\global\long\def\partb#1{\left\{  \left(B_{1},y{}_{1}\right),\ds,\left(B_{#1},y_{#1}\right)\right\}  }%

\global\long\def\ac{\text{AC}}%
\global\long\def\gas#1{\alpha^{*}\of{#1}}%
\global\long\def\scivp{\text{SCIVP}}%
\global\long\def\es{\text{ES}}%
\global\long\def\szy{\text{SZ}}%
\global\long\def\conn{\text{Conn}}%
\global\long\def\cn{\mbox{Cn}}%
\global\long\def\szl{\text{SZ}\cur{\mathbb{L}}}%
\global\long\def\won{\wonwon}%
\global\long\def\bb#1{\mathbb{#1}}%
\global\long\def\up#1{#1^{\uparrow}}%
\global\long\def\mesh{\mmesh}%
\global\long\def\adh{\text{adh}}%

\global\long\def\nsub{\nsubseteq}%

\global\long\def\won{\gimel}%

\title{Three problems in convergence theory}
\author{\noun{Jerzy Wojciechowski}\\
West Virginia University}
\date{September 24, 2021}

\maketitle
\thispagestyle{empty}
\begin{abstract}
In this note it is proved that the class of paratopologies is simple
and that under the assumption that the measurable cardinals form a
proper class, the class of hypotopologies is not simple. Moreover,
an example is given of a Hausdorff convergence with idempotent set
adherence (subdiagonal convergence) that is not weakly diagonal.
\end{abstract}

\section{Introduction.}

One way to describe a topological space is to consider the neighborhood
filters of points and the convergence relation between points and
filters defined using the neighborhood filters. Convergence theory
studies this relation in greater generality and considers the topological
convergence only as a special case. The need to study non-topological
convergences was pointed out by Gustave Choquet in his fundamental
paper \cite{Choq-conv}, where he investigates natural convergences
on the family of closed subsets of a topological space and concludes
that some of them are not topological unless the underlying topology
is locally compact. 

The exact collection of axioms required for a convergence space to
satisfy varies in the literature. We follow the definition of Dolecki
in \cite{Dol-init} (see also \cite{DoMy-found,Dol-Royal}). A \emph{convergence}
$\xi$ on a nonempty set $X$ is a relation between the elements of
$X$ and the filters on $X$. Given a filter $\cal F$ on $X$ and
$x\in X$, we write $x\in\lim_{\xi}\cal F$ when $\cur{x,\cal F}\in\xi$
and we require that $\lim_{\xi}\cal F\sub\lim_{\xi}\cal G$ whenever
$\cal F\sub\cal G$ and that $x\in\lim_{\xi}\up{\set x}$ for every
$x\in X$, where $\up{\set x}:=\set{A\sub X:x\in A}$ is the principal
ultrafilter generated by $x$. In particular, any topology on a set
$X$ induces a convergence $\tau$ defined by $x\in\lim_{\tau}\cal F$
if and only if $U\in\cal F$ for every open set $U\sub X$ with $x\in U$.
Any convergence obtained in such a way is called \emph{topological}
or just a \emph{topology}.

Convergences more general than topologies, called pretopologies, had
been already considered by Hausdorff \cite{Hau-raume}, Sierpi\'{n}ski
\cite{Sier-topol} and \v{C}ech \cite{Cech-topol}. A convergence
is a \emph{pretopology} when filters convergent to a point $x$ are
refinements of a single vicinity filter at $x$. However, a breakthrough
was made by Choquet in \cite{Choq-conv} who introduced a still larger
class of \emph{pseudotopologies}, by requiring that $x\in\lim_{\xi}\cal F$
whenever $x\in\lim_{\xi}\cal U$ for every ultrafilter $\cal U$ containing
$\cal F.$

As discovered by Dolecki \cite{Dol-compact}, pseudotopologies arise
in a natural way when we consider the property of compactness. Analogous
considerations for the property of countable compactness lead to paratopologies
and for the Lindelöf property to hypotopologies (see \cite{Dol-compact}).
Those classes of convergences are defined as $\bh$-adherence-determined
convergences for suitably chosen classes $\bh$ of filters.

Let $\xi$ be a convergence on a set $X$ and $\cal H$ be a filter
on $X$. We say that $\cal H$ \emph{adheres} to $x\in X$ (and write
$x\in\adh_{\xi}\cal H$) if there exists a filter $\cal G$ that refines
$\cal H$ with $x\in\lim_{\xi}\cal G$. Given filters $\cal F$ and
$\cal H$, we say that $\cal F$ and $\cal H$ \emph{mesh} (in symbols
$\cal F\mesh\cal H$) iff $F\cap H\neq\emp$ for every $F\in\cal F$
and $H\in\cal H$. Note that if $\xi$ is a convergence on some set
$X$, then $X$ is uniquely determined by $\xi$. We will denote such
$X$ by $\abv{\xi}$. 

Let $\bh$ be a class of filters including all principal filters.
A convergence $\xi$ is $\bh$-\emph{adherence-determined} if $x\in\lim_{\xi}\cal F$
whenever $x\in\t{adh}_{\xi}\cal H$ for each filter $\cal H\in\bh$
such that $\cal H$ is a filter on $\abv{\xi}$ and $\cal H\mesh\cal F$.
Note that a convergence is a pseudotopology if and only if it is $\bb H$-adherence-determined,
where $\bb H$ is the class of all filters and it is a pretopology
when we consider the smallest possible class $\bh$ consisting of
principal filters only.

If we take $\bh$ to be the class of all countably based filters,
then we get the class of \emph{paratopologies}. This class was introduced
by Dolecki \cite{Dol-quotient} to enable a unification (with the
aid of one formula) of various classes of quotient maps (corresponding
to parameters in the formula). In particular, the class of hereditary
quotient maps corresponds to the class of pretopologies and the class
of biquotient maps to pseudotopologies. The class of paratopologies
is obtained in this correspondence when we consider countably biquotient
maps. 

If $\bh$ is the class of all countably complete filters (those that
are closed under countable intersections), then the obtained class
of $\bh$-adherence-determined convergences is the class of \emph{hypotopologies}.
This class of convergences was introduced by Dolecki \cite{Dol-compact}
to enable another unification procedure. 

It turns out that each topology $\tau$ can be represented as the
initial convergence with respect to continuous maps from $\tau$ to
the Sierpi\'{n}ski topology on a set with two elements. The same is
true for pretopologies $\xi$ when we consider maps from $\xi$ to
the Bourdaud pretopology (see Antoine \cite{Ant-etude} and Bourdaud
\cite{Bour-espaces,Bour-cart}). However, as proved by Eva and Robert
Lowen \cite{LoLo-nonsimp}, there is no initially dense pseudotopology
and the same negative result holds for the class of all convergences.
In this paper we will investigate this property for the classes of
paratopologies and hypotopologies.

To formally define the concept of an initial convergence, let's recall
that if $\xi$ and $\eta$ are convergences and $f:\abv{\xi}\to\abv{\eta}$
(where $\abv{\xi}$ and $\abv{\eta}$ are the ground sets for the
convergences $\xi$ and $\eta$, respectively) then $f$ is \emph{continuous}
from $\xi$ to $\eta$ (we write $f\in C\of{\xi,\eta}$) if $f\of x\in\lim_{\eta}f\bof{\cal F}$
for every filter $\cal F$ on $X$ and every $x\in\lim_{\xi}\cal F$.
If $\eta$ is a convergence, $X$ is a set and $f:X\to\abv{\eta}$,
then the relation $f^{-}\eta$ between $X$ and $\bb FX$ (all the
filters on $X$) relating $x$ to $\cal F$ if and only if $f\of x\in\lim_{\xi}f\bof{\cal F}$
is a convergence on $X$. In other words, $\xi:=f^{-}\eta$ is the
coarsest convergence on $X$ that makes $f$ continuous from $\xi$
to $\eta$. 

Let $\Phi$ be a class of convergences. We say that a convergence
$\eta$ is \emph{initially dense} in $\Phi$ iff for each $\xi\in\Phi$
there exists a set $A$ of continuous functions from $\xi$ to $\eta$
such that $\xi=\bigcap_{f\in A}f^{-}\eta$. Note that if $\eta$ is
initially dense in $\Phi$, then $\eta\in\Phi$ and that $\xi=\bigcap_{f\in A}f^{-}\eta$
means that $\xi$ is the coarsest convergence on $\abv{\xi}$ for
which all functions in $A$ are continuous. A class of convergences
is \emph{simple} provided it includes an initially dense convergence. 

As a well known example, recall that a topological space is completely
regular if and only if it is homeomorphic to a subspace of $\br^{X}$
for some set $X$. Using the terminology introduced above, that is
equivalent to saying that the standard topology on the set of real
numbers is initially dense in the class of completely regular topologies.
In particular, the class of completely regular topologies is simple.

We will prove that:
\begin{thm}
\label{thm1}The class of paratopologies is simple.
\end{thm}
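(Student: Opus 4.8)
The plan is to construct a single paratopology $\eta$ and to show that it is initially dense in the class of paratopologies. It is convenient to take for $A$ the set $C\of{\xi,\eta}$ of all continuous maps from a given paratopology $\xi$ to $\eta$: since every such map is continuous one automatically has $\xi\sub\bcap_{f\in A}f^{-}\eta$, so the whole content is the reverse inclusion, which unwinds to the following separation statement: for every $\xi$ and every pair with $x\nin\lim_{\xi}\cal F$ there should be a continuous $f\colon\abv{\xi}\to\abv{\eta}$ with $f\of x\nin\lim_{\eta}f\bof{\cal F}$. The input is the adherence-determination of $\xi$: as $\xi$ is $\bh$-adherence-determined for $\bh$ the countably based filters, $x\nin\lim_{\xi}\cal F$ furnishes a countably based filter $\cal H$, generated by a decreasing sequence $B_{1}\super B_{2}\super\ds$, with $\cal H\mesh\cal F$ and $x\nin\adh_{\xi}\cal H$; note that the latter forces $x\nin\bcap_{n}B_{n}$, because $x\in\bcap_{n}B_{n}$ would give $\up{\set x}\super\cal H$ and hence $x\in\adh_{\xi}\cal H$.

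For the target I would use the set $\bn^{*}=\bn\cup\set{\ity}$ with the convergence $\eta$ in which $\ity$ is a sink, while for a finite $m$ one declares $m\in\lim_{\eta}\cal G$ exactly when $\cal G$ contains an \emph{admissible} set $\set{0,1,\ds,k}\cup\set{\ity}$ for some $k$. This $\eta$ is a convergence, and a short calculation confirms that it is a paratopology: if $\cal G$ contains no admissible set then the cofinite filter of $\bn$ meshes $\cal G$ yet has no admissible set in any refinement, witnessing that $m\nin\lim_{\eta}\cal G$ is consistent with adherence-determination, and conversely an admissible member of $\cal G$ meshes every countably based $\cal K\mesh\cal G$. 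The single nonprincipal feature of $\eta$ is thus a countably based filter that adheres to $\ity$ but to no finite point, and $\eta$ will detect exactly one ``countably based non-adherence'' at a time.

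To the witness $\cal H$ I attach the map $f\colon\abv{\xi}\to\bn^{*}$ sending every point of $\adh_{\xi}\cal H$ to $\ity$ and every other point $y$ to its level $\sup\set{n:y\in B_{n}}$, which is finite off $\adh_{\xi}\cal H$. This $f$ is continuous: at a point of $\adh_{\xi}\cal H$ the value $\ity$ is a sink, while at a point $y\nin\adh_{\xi}\cal H$ any $\cal G$ with $y\in\lim_{\xi}\cal G$ fails to mesh $\cal H$ (otherwise $y\in\adh_{\xi}\cal H$), so some $G\in\cal G$ misses some $B_{n}$ and $f\of G\sub\set{0,\ds,n-1}\cup\set{\ity}$ is admissible, giving $f\of y\in\lim_{\eta}f\bof{\cal G}$. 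Since $x\nin\adh_{\xi}\cal H$, the value $f\of x$ is finite, so detection means that $f\bof{\cal F}$ contains no admissible set; as $f^{-1}\of{\set{0,\ds,k}\cup\set{\ity}}=B_{k+1}^{c}\cup\adh_{\xi}\cal H$, this is the requirement that $B_{k+1}^{c}\cup\adh_{\xi}\cal H\nin\cal F$ for every $k$, equivalently that $\cal F$ meets each $B_{k+1}$ outside $\adh_{\xi}\cal H$.

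The main obstacle is exactly this last detection step: nothing so far prevents $\cal F$ from meshing $\cal H$ only through the adherence set $\adh_{\xi}\cal H$, in which case the admissible preimages fall into $\cal F$ and $f$ fails to separate. Overcoming it is the technical heart of the argument, and it is where the countable character of $\bh$ is indispensable. One must either select the witness $\cal H$ so that $\cal F$ already meshes it off $\adh_{\xi}\cal H$ --- which I would attempt by proving that if every countably based witness were meshed by $\cal F$ solely through its own adherence set then $x\in\lim_{\xi}\cal F$, contradicting the choice of $\cal F$ --- or, failing a single such witness, peel the adherence set in countably many layers and record the layers in a second coordinate, replacing $\bn^{*}$ by a sequential fan and $f$ by a countable family of maps into it. In either case the decisive point is that a countably based filter is determined by countably much data, so only a set of test maps is ever required and a single $\eta$ suffices; this boundedness is precisely what fails for arbitrary filters and underlies the contrast with the non-simplicity of pseudotopologies. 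Granting the detection step, $A=C\of{\xi,\eta}$ is separating and $\xi=\bcap_{f\in A}f^{-}\eta$, so $\eta$ is initially dense and the class of paratopologies is simple.
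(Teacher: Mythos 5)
Your overall strategy --- a single small target convergence, one test map for each decreasing sequence $B_{0}\super B_{1}\super\ds$ of subsets of $\abv{\xi}$, and the adherence-determination of $\xi$ supplying a countably based witness $\cal H$ with $\cal H\mesh\cal F$ and $x\nin\adh_{\xi}\cal H$ --- is the same as the paper's. But there is a genuine gap, and you have located it yourself: the detection step. With the two-type target $\bn\cup\set{\ity}$ and a map sending all of $\adh_{\xi}\cal H$ to the single sink $\ity$, detection requires $\cur{\abv{\xi}\sem B_{k+1}}\cup\adh_{\xi}\cal H\nin\cal F$ for every $k$, and nothing rules out that $\cal F$ meshes $\cal H$ only inside $\adh_{\xi}\cal H$ (for instance $\cal F$ may simply contain the set $\adh_{\xi}\cal H$). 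Neither of your two suggested repairs is carried out, and the first is dubious as stated: from ``every countably based witness is meshed by $\cal F$ only through its own adherence set'' one cannot obviously conclude $x\in\lim_{\xi}\cal F$. As it stands the proposal does not prove the theorem.

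The paper closes exactly this gap by refining the target rather than the witness. Its space is $\gw\cup\set{\ity_{0},\ity_{1},\ity_{2}}$, with admissible sets for the one bad point $\ity_{2}$ of the form $K\cup\set{\ity_{1},\ity_{2}}$, $K\sub\gw$ finite, and its map $f_{\cal B}$ separates three kinds of points that your map merges: a point of $B_{n}\sem B_{n+1}$ keeps its level $n$ \emph{even when it lies in} $\adh_{\xi}\cal H$; a point of the core $\bcap_{n}B_{n}$ goes to a sink $\ity_{0}$ that is \emph{excluded} from the admissible sets; and only the adherence points outside $\bcup_{n}B_{n}$ go to the sink $\ity_{1}$ that the admissible sets contain. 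Then $\cal F\mesh\cal H$ gives $F\cap B_{n}\neq\emp$ for every $F\in\cal F$ and every $n$, so each $F$ either meets the core (whence $\ity_{0}\in f_{\cal B}\of F$) or meets $B_{m}\sem B_{m+1}$ for arbitrarily large $m$ (whence $f_{\cal B}\of F\cap\gw$ is infinite); either way $f_{\cal B}\of F$ is not admissible and detection succeeds unconditionally. If you split your single $\ity$ into these two sinks and stop collapsing the in-base part of $\adh_{\xi}\cal H$, your argument goes through and coincides with the paper's.
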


To state our result about hypotopologies, we need to recall the concept
of a measurable cardinal (see \cite{Jech}). A \emph{measurable cardinal}
is a cardinal $\gk$ admitting a $\gk$-complete free ultrafilter
(a free ultrafilter closed under intersections of fewer than $\gk$
members).

We are going to show:
\begin{thm}
\label{thm2}Assume that for each cardinal there exists a larger measurable
cardinal. Then the class of hypotopologies is not simple.
\end{thm}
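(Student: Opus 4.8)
The plan is to prove the contrapositive: assuming a measurable cardinal exists above every cardinal, I would show that no single hypotopology $\eta$ can be initially dense in the class of all hypotopologies. The strategy rests on a cardinality/size obstruction. If $\eta$ were initially dense, then every hypotopology $\xi$ would embed initially into a power of $\eta$, i.e. $\xi = \bigcap_{f \in A} f^{-}\eta$ for some set $A \subseteq C(\xi,\eta)$. The key point is that each map $f^{-}\eta$ only ``sees'' the convergence structure of $\eta$ through the fixed ground set $\abv{\eta}$, so the complexity of convergences representable this way is bounded by a cardinal depending only on $\abv{\eta}$. I would then construct, for a sufficiently large measurable cardinal $\gk$, a hypotopology whose adherence behavior is governed by a $\gk$-complete free ultrafilter and which is too ``rich'' to be captured by maps into the fixed $\eta$.

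\medskip

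The key steps I would carry out are as follows. First, I would make precise the sense in which being $\bh$-adherence-determined (for $\bh$ the class of countably complete filters) is preserved under taking initial convergences $\bigcap_{f \in A} f^{-}\eta$; this is needed to know that $\eta$ initially dense forces $\eta$ itself to be a hypotopology and that the representation respects the class. Second, and crucially, I would exploit measurable cardinals to build a family of hypotopologies indexed by large measurable cardinals $\gk$. For each such $\gk$, fix a $\gk$-complete free ultrafilter $\cal U$ on $\gk$ and define a convergence $\xi_\gk$ on a ground set of size $\gk$ in which adherence of the countably complete (indeed $\gk$-complete) filter $\cal U$ forces convergence in a way that genuinely requires resolving power at the level of $\gk$. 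The countable completeness of $\cal U$ is exactly what places $\cal U$ inside the defining class $\bh$, so that $\xi_\gk$ is honestly a hypotopology. Third, I would show that continuity of a map $f : \abv{\xi_\gk} \to \abv{\eta}$ cannot transport the $\gk$-complete ultrafilter structure faithfully once $\gk$ exceeds the relevant cardinal invariants of $\eta$ (for instance $\gk > 2^{\abv{\eta}}$, or $\gk$ larger than the number of ultrafilters on $\abv{\eta}$): a $\gk$-complete ultrafilter pushed forward along $f$ to a set that is too small must concentrate, collapsing the distinctions that $\xi_\gk$ was built to record. This forces $\bigcap_{f\in A} f^{-}\eta$ to be strictly coarser than $\xi_\gk$, contradicting initial density.

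\medskip

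The main obstacle I expect is the third step: carefully quantifying how much of the $\gk$-complete ultrafilter structure survives the push-forward $f[\cal U]$ and converting that into a genuine failure of the initial-convergence equation. The delicate point is that $f[\cal U]$ on $\abv{\eta}$ is again a countably complete (even $\gk$-complete) ultrafilter, and if $\gk$ is measurable and strictly larger than $2^{\abv{\eta}}$ one can argue that $f$ must be constant on a set in $\cal U$ (by a standard measurable-cardinal argument: a $\gk$-complete ultrafilter cannot be split into $\le 2^{\abv{\eta}} < \gk$ many pieces without one piece lying in the ultrafilter), so that the adherence information about $\cal U$ degenerates under every $f$ in $A$ simultaneously. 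Making this degeneration yield a point that lies in $\lim_{\bigcap_f f^{-}\eta}$ but not in $\lim_{\xi_\gk}$ of the relevant filter — and doing so for \emph{every} candidate $\eta$ at once by choosing $\gk$ above the cardinal $\abv{\eta}$ supplied by the large-cardinal hypothesis — is where the bulk of the technical care will lie, and it is precisely the assumption that measurable cardinals form a proper class that guarantees such a $\gk$ exists no matter which $\eta$ is proposed.
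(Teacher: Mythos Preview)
Your overall strategy matches the paper's (which adapts the Lowen--Lowen argument): given a candidate $\eta$ on $Y$, choose a measurable $\kappa>\operatorname{card}Y$, fix a uniform countably complete ultrafilter $\mathcal{U}$ on a set $X$ of size $\kappa$, and build a hypotopology $\xi$ on $X$ in which $\mathcal{U}$ fails to converge to a distinguished point $x_\infty$ while every continuous $f:X\to Y$ nonetheless forces $x_\infty\in\lim_{f^{-}\eta}\mathcal{U}$.

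The paper is explicit in the two places you leave open. First, it writes down $\xi$: every $x\neq x_\infty$ is isolated, and $x_\infty\in\lim_\xi\mathcal{F}$ iff $\bigcap\mathcal{F}\subseteq\{x_\infty\}$ and $\mathcal{F}\nsubseteq\mathcal{U}$; that this is a hypotopology is immediate because $\mathcal{U}$ itself is the countably complete witness to $x_\infty\notin\operatorname{adh}_\xi\mathcal{U}$. Second, the paper does \emph{not} argue via concentration of $f[\mathcal{U}]$. Instead it proves (its Lemma~4) that for every $f:X\to Y$ there is a \emph{second} uniform countably complete ultrafilter $\mathcal{W}\neq\mathcal{U}$ with $f[\mathcal{W}]=f[\mathcal{U}]$, obtained by splitting each uncountable fibre of $f$ in half and transporting $\mathcal{U}$ across a fibre-preserving bijection. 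Since $\mathcal{W}$ is free and $\mathcal{W}\nsubseteq\mathcal{U}$, one has $x_\infty\in\lim_\xi\mathcal{W}$, and continuity then gives $f(x_\infty)\in\lim_\eta f[\mathcal{W}]=\lim_\eta f[\mathcal{U}]$, which is exactly $x_\infty\in\lim_{f^{-}\eta}\mathcal{U}$.

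Your concentration idea (by $\kappa$-completeness some fibre $f^{-1}(y_0)$ lies in $\mathcal{U}$) can be completed to the same conclusion: that fibre has size $\kappa$, so it carries a free ultrafilter $\mathcal{W}\neq\mathcal{U}$ with $f[\mathcal{W}]=\{y_0\}^{\uparrow}=f[\mathcal{U}]$. But be aware that concentration \emph{alone} does not close the argument --- knowing $f[\mathcal{U}]$ is principal tells you nothing about $f(x_\infty)$, because $x_\infty\notin\lim_\xi\mathcal{U}$ and continuity is silent there. You must pass through an auxiliary filter that \emph{does} converge to $x_\infty$, which is precisely the content of the paper's lemma. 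Two minor points: $\kappa>\operatorname{card}Y$ already suffices (your bound $\kappa>2^{|\eta|}$ is unnecessary), and your first step on closure of hypotopologies under initial structures is not used in the argument.
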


The assumption that measurable cardinals form a proper class is a
very strong set-theoretic assumption. It would be desirable to find
a proof requiring weaker assumptions. In particular, Theorem \ref{thm2}
suggests the following question.
\begin{problem}
Can it be proved in ZFC that the class of hypotopologies is not simple?
\end{problem}

Another property of convergences studied in this paper is diagonality.
Diagonal convergences were defined by Kowalsky \cite{kowal-limes}
(see also \cite{Dol-init,Dol-Royal,DoMy-found}). This property is
important since a topology can be characterized as a diagonal pretopology.
Another way to characterize topologies is to say that a topology is
a pretopology $\xi$ with idempotent set adherence, that is, such
that 
\[
\adh_{\xi}\of{\adh_{\xi}A}=\adh_{\xi}A
\]
for every $A\sub\abv{\xi}$, where
\[
\adh_{\xi}A:=\adh_{\xi}\set{F\sub X:A\sub F}.
\]
The convergences with idempotent set adherence are called \emph{subdiagonal}
by Dolecki \cite{Dol-Royal}. It is true in general that each diagonal
convergence is subdiagonal. 

In \cite{Lo-Co-charact}, Eva Lowen-Colebunders introduced and investigated
convergences $\xi$ such that every filter has a closed adherence,
that is, such that 
\[
\adh_{\xi}\of{\adh_{\xi}\cal F}=\adh_{\xi}\cal F
\]
for every filter $\cal F$ on $\abv{\xi}$. She formulated a condition,
called \emph{weak diagonality}, which is a weakening of the diagonality
property of Kowalsky and proved that a convergence is weakly diagonal
if and only if filters have closed adherences.

Note that each weakly diagonal convergence is subdiagonal. Moreover,
the definition of weak diagonality implies that diagonal convergences
are weakly diagonal. Thus if $\xi$ is a pretopology, then all three
of these concepts are equivalent to $\xi$ being a topology. Example
1.5 in \cite{Lo-Co-charact} shows that there exists a subdiagonal
convergence which is not weakly diagonal. In that example, however,
the convergence is not Hausdorff. We say that a convergence is \emph{Hausdorff}
provided that each filter has at most one limit.

We will prove the following result.
\begin{thm}
\label{thm4}There exists a Hausdorff subdiagonal convergence that
is not weakly diagonal.
\end{thm}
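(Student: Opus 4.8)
The plan is to build the example as a Hausdorff \emph{pseudotopology}, prescribed by its ultrafilter limits. The key simplification is that, by the definition of adherence, $x\in\adh_{\xi}\mathcal{F}$ holds iff some \emph{ultrafilter} refining $\mathcal{F}$ converges to $x$ (extend any convergent refinement to an ultrafilter and use monotonicity of $\lim_{\xi}$); hence both $\adh_{\xi}A$ and $\adh_{\xi}\mathcal{F}$ depend only on the ultrafilter limits. So I would fix a \emph{partial} function $\ell$ assigning to some ultrafilters a single limit point, with $\ell(\up{\set x})=x$, and set $x\in\lim_{\xi}\mathcal{G}$ iff $\ell(\mathcal{U})=x$ for every ultrafilter $\mathcal{U}\supseteq\mathcal{G}$. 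This is a convergence, and it is Hausdorff precisely because $\ell$ is single-valued.

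The ground set is $X=\set{z}\cup E\cup D$ with $E=\set{e_{n}:n\in\bn}$, $D=\bigcup_{n}C_{n}$, and columns $C_{n}=\set{x_{n,k}:k\in\bn}$. Fix a free ultrafilter $\mathcal{U}_{0}$ on $E$ and let $\mathcal{D}$ be the filter on $D$ of complements of finite unions of columns. Define $\ell$ by: $\ell(\up{\set p})=p$; a free ultrafilter containing some $C_{n}$ has limit $e_{n}$; $\ell(\mathcal{U}_{0})=z$; and $\ell(\mathcal{U})=z$ for $\mathcal{U}$ in a hand-picked family $Z$ of free ultrafilters on $D$, each refining $\mathcal{D}$ (so containing no column); all other ultrafilters carry no limit. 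These three families are pairwise disjoint, so $\ell$ is single-valued. The witness for failure of weak diagonality is the free filter $\mathcal{F}$ generated by the ``above-graph'' sets $F_{\phi}=\set{x_{n,k}:k\ge\phi(n)}$ for $\phi\colon\bn\to\bn$. Each $F_{\phi}$ meets every column in a cofinite-in-column set, so $\mathcal{F}\mesh C_{n}$ and every $e_{n}\in\adh_{\xi}\mathcal{F}$; since $\mathcal{F}$ is free and concentrated on $D$, no principal ultrafilter, no $\mathcal{U}_{0}$, and---by the construction of $Z$---no member of $Z$ refines $\mathcal{F}$. Hence $\adh_{\xi}\mathcal{F}=E$. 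But $E\in\mathcal{U}_{0}$ and $\ell(\mathcal{U}_{0})=z$, so $z\in\adh_{\xi}E=\adh_{\xi}\of{\adh_{\xi}\mathcal{F}}$ while $z\notin E$; thus $\adh_{\xi}\of{\adh_{\xi}\mathcal{F}}\supsetneq\adh_{\xi}\mathcal{F}$.

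For subdiagonality I would compute $\adh_{\xi}A$ (it is $A\cap D$, together with the $e_{n}$ having $e_{n}\in A$ or $A\cap C_{n}$ infinite, together with $z$ exactly when $z\in A$, or $A\cap E\in\mathcal{U}_{0}$, or some member of $Z$ contains $A$) and verify idempotence. The only delicate coordinate is $z$: whenever $A$ meets $\mathcal{U}_{0}$-many columns infinitely, $\adh_{\xi}A\cap E\in\mathcal{U}_{0}$, so a second adherence would pick up $z$ via $\mathcal{U}_{0}$; idempotence therefore \emph{requires} $z\in\adh_{\xi}A$ already. This is exactly what $Z$ is for: $Z$ must be rich enough that every such ``positive'' $A$ lies in some member of $Z$.

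The hard part is constructing $Z$ to meet both demands at once: (a) for every positive $A$, some ultrafilter of $Z$ passes through $A$ (for idempotence), yet (b) no member of $Z$ refines $\mathcal{F}$ (so that $z\notin\adh_{\xi}\mathcal{F}$). The trap is to take $Z$ to be all $\mathcal{U}_{0}$-sums of free column ultrafilters: this gives (a) but can be steered (choosing each column ultrafilter above the trace of $\mathcal{F}$) to refine $\mathcal{F}$, wrecking (b). The way through is to select the ultrafilters of $Z$ by hand: for each positive $A$ one needs a free ultrafilter containing $A$ and $\mathcal{D}$ but not refining $\mathcal{F}$, and such an ultrafilter exists iff $\mathcal{F}\not\sub\mathcal{D}\vee\up A$. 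This inequality is where the generators $F_{\phi}$ earn their keep: given positive $A$, taking $\phi(n)$ just above the least $k$ with $x_{n,k}\in A$ forces $A\sem F_{\phi}$ to meet infinitely many columns, so $F_{\phi}\notin\mathcal{D}\vee\up A$. I would close by checking the remaining routine points---that adding $z$ to $\adh_{\xi}A$ for the extra positive sets does not spoil idempotence (the $Z$-route predicate $R$ satisfies $R(\adh_{\xi}A)\Leftrightarrow R(A)$ and $R(A)\Rightarrow z\in\adh_{\xi}A$), and that the $e_{n}$-coordinate and $D$-coordinate of $\adh_{\xi}$ are automatically idempotent---which together give that $\xi$ is subdiagonal and Hausdorff but not weakly diagonal.
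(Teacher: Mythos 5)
Your construction is correct, and it is built on the same skeleton as the paper's: a three-tier Hausdorff pseudotopology prescribed by its ultrafilter limits, with isolated points arranged in countably many countable columns, middle points $e_{n}$ that are limits exactly of the free ultrafilters along the $n$-th column, a top point $z$, and the same witness filter $\cal F$ of sets cofinite in every column. The genuine difference lies in how the ultrafilters on $D$ converging to $z$ are specified. The paper declares that a free ultrafilter converges to $z$ iff it refines the cofinite filter on $E$ (all of them, rather than your single $\cal U_{0}$) or refines the cofinite filter on some selection set $\set{x_{0},x_{1},\ds}$ with $x_{n}$ in the $n$-th column. This canonical family meets both of your demands automatically and simultaneously: if $A$ meets infinitely many columns, a selection threaded through $A$ witnesses $z\in\adh_{\xi}A$ (so subdiagonality needs no separate richness hypothesis), while no selection ultrafilter can refine $\cal F$, since $\bigcup_{n}\cur{C_{n}\sem\set{x_{n}}}\in\cal F$ is disjoint from the selection set (so $z\nin\adh_{\xi}\cal F$ is immediate). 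Your route instead hand-picks, for each positive $A$, an ultrafilter through $A$ and $\cal D$ omitting a suitable $F_{\phi}$; the meshing computation you give (taking $\phi\of n$ just above the first point of $A$ in column $n$, so that $A\sem F_{\phi}$ still meets infinitely many columns and hence $F_{\phi}\nin\cal D\vee\up A$) is correct, and the resulting family $Z$ does satisfy both (a) and (b). But it costs you an explicit existence argument plus a choice function, and a slightly more delicate idempotence check at $z$, all of which the paper's selection-sequence family renders unnecessary. Both arguments are valid; the paper's is the more economical realization of the same idea.
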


\section{Proof of Theorem \ref{thm1}.}

Let $X=\gw\cup\set{\ity_{0},\ity_{1},\ity_{2}}$, where $\ity_{0},\ity_{1},\ity_{2}$
are distinct and do not belong to $\gw$. Let $\won$ be the convergence
on $X$ such that a filter $\cal F$ converges to $x$ iff either
$x\neq\ity_{2}$ or $K\cup\set{\ity_{1},\ity_{2}}\in\cal F$, for
some finite $K\sub\gw$. Note that, in particular, $\ity_{2}\in\lim_{\gimel}\cal F$
if and only if there is finite $F\in\cal F$ with $\ity_{0}\nin F$.

We verify that $\won$ is a paratopology. Indeed, assume $x\in X\sem\lim_{\gimel}\cal F$.
Then $x=\ity_{2}$ and $\ity_{0}\in F$ for every finite $F\in\cal F$.
We want a countably based filter $\cal H$ such that $\cal F\mesh\cal H$
and $x\nin\adh_{\gimel}\cal H.$ If $\ity_{0}\in\bigcap\cal F$, then
$\cal H:=\up{\set{\ity_{0}}}$ satisfies the requirements. Otherwise,
all sets in $\cal F$ are infinite and we can use the cofinite filter
on $X$ as $\cal H$.

We will show that $\won$ is initially dense in the class $\Phi$
of paratopologies. Let $\eta$ be a paratopology on a set $Y$. We
need to find a set $C$ of continuous functions from $\eta$ to $\gimel$
such that $\eta=\bigcap_{f\in C}f^{-}\gimel$.

Let $\mathbf{B}$ be the collection of all countable families
\[
\cal B=\set{B_{0},B_{1},\ds},
\]
of subsets of $Y$, where $B_{0}\super B_{1}\super B_{2}\super\ds$.
For each $\cal B\in\mathbf{B}$, let $f_{\cal B}:Y\to X$ be defined
by
\[
f_{\cal B}\of y=\begin{cases}
n & \text{if }y\in B_{n}\sem B_{n+1}\text{ for some }n\in\gw\\
\ity_{0} & \text{if }y\in B_{n}\text{ for every }n\in\gw\\
\ity_{1} & \text{if }y\in\t{adh}_{\eta}\cal H\sem\bigcup_{n\in\gw}B_{n},\text{ where }\cal H\text{ is the filter generated by \ensuremath{\cal B}}\\
\ity_{2} & \text{otherwise}.
\end{cases}
\]
Let 
\[
C:=\set{f_{\cal B}:\cal B\in{\bf B}}.
\]
It remains to verify that $\eta=\bigcap_{f\in C}f^{-}\gimel$. Let
$\cal F$ be a filter on $Y$ and $y\in Y$. It suffices to show that
$y\in\lim_{\eta}\cal F$ if and only if $f\of y\in\lim_{\won}f\bof{\cal F}$
for every $f\in C$.
\begin{claim*}
Assume that $y\in\lim_{\eta}\cal F$. Then $f\of y\in\lim_{\won}f\bof{\cal F}$
for every $f\in C.$
\end{claim*}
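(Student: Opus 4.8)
The plan is to fix an arbitrary $\cal B=\set{B_0,B_1,\ds}\in\mathbf B$, write $f=f_{\cal B}$, let $\cal H$ be the filter generated by $\cal B$, and verify $f\of y\in\lim_{\won}f\bof{\cal F}$ straight from the definition of $\won$. Since $C=\set{f_{\cal B}:\cal B\in\mathbf B}$, handling an arbitrary $\cal B$ settles the claim for every $f\in C$. The argument splits according to the value $f\of y$.

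First I would dispose of the case $f\of y\neq\ity_2$. By the definition of $\won$, every filter on $X$ converges to every point other than $\ity_2$, so $f\of y\in\lim_{\won}f\bof{\cal F}$ holds automatically, whatever $\cal F$ is. The substantive case is $f\of y=\ity_2$. Inspecting the four clauses defining $f_{\cal B}$, this value occurs exactly when $y\nin\bigcup_{n\in\gw}B_n$ and $y\nin\adh_{\eta}\cal H$.

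The crux is to combine $y\nin\adh_{\eta}\cal H$ with the hypothesis $y\in\lim_{\eta}\cal F$ to conclude that $\cal F$ and $\cal H$ fail to mesh. Indeed, if $\cal F\mesh\cal H$, then the filter generated by $\cal F\cup\cal H$ would be proper and would refine both $\cal F$ and $\cal H$; since it refines $\cal F$ and $y\in\lim_{\eta}\cal F$, monotonicity of $\lim_{\eta}$ would make $y$ a limit of it, and since it also refines $\cal H$ this would place $y$ in $\adh_{\eta}\cal H$, a contradiction. Hence there are $F\in\cal F$ and $H\in\cal H$ with $F\cap H=\emp$; as $\cal H$ is generated by the decreasing sequence $\cal B$, we may take $H=B_m$ for some $m\in\gw$, so that $F\cap B_m=\emp$.

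Finally I would transport $F\cap B_m=\emp$ to the image filter. For each $y'\in F$ we have $y'\nin B_m$, hence $y'\nin B_n$ for all $n\ge m$ by the nesting of the $B_n$; running through the clauses defining $f_{\cal B}$ then forces $f\of{y'}\in\set{0,1,\ds,m-1}\cup\set{\ity_1,\ity_2}$, and in particular $f\of{y'}\neq\ity_0$. Thus $f\of F$ lies in the finite set $S:=\set{0,1,\ds,m-1,\ity_1,\ity_2}$, which omits $\ity_0$. Since $F\sub f^{-1}\of S$ and $F\in\cal F$, we get $S\in f\bof{\cal F}$, and the remark characterizing convergence to $\ity_2$ (a filter has $\ity_2$ as a limit iff it contains a finite set avoiding $\ity_0$) yields $\ity_2=f\of y\in\lim_{\won}f\bof{\cal F}$, completing the case and the claim. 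I expect the meshing step to be the only delicate point: the insertion of $\ity_0$ and $\ity_1$ into the construction is precisely what converts non-adherence of $\cal H$ into a finite image omitting $\ity_0$, and the bookkeeping of which clause each $y'\in F$ falls under is the part to carry out with care.
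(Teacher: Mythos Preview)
Your argument is correct and essentially the same as the paper's. The only cosmetic difference is the case split: you branch on whether $f\of y=\ity_2$, while the paper branches on whether $\cal F\mesh\cal H$; both routes rest on the same two observations, namely that $\cal F\mesh\cal H$ together with $y\in\lim_{\eta}\cal F$ forces $y\in\adh_{\eta}\cal H$ (hence $f\of y\neq\ity_2$), and that non-meshing yields a finite set $K\cup\set{\ity_1,\ity_2}\in f\bof{\cal F}$.
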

\begin{proof}
Let $f=f_{\cal B}\in C$ with
\[
\cal B=\set{B_{0},B_{1},\ds}\in\mathbf{B},
\]
where $B_{0}\super B_{1}\super B_{2}\super\ds$, and let $\cal H$
be the filter generated by $\cal B$. If $\cal F$ and $\cal H$ do
not mesh, then there is $F\in\cal F$ and $n\in\gw$ such that $F\cap B_{m}=\emp$
for every $m>n$. Taking $K:=\set{k\in\gw:k\le n}$ we have $K\cup\set{\ity_{1},\ity_{2}}\in f\bof{\cal F}$
so $f\of y\in\lim_{\won}f\bof{\cal F}$. If $\cal F\mesh\cal H$,
then $y\in\t{adh}_{\won}\cal H$ so 
\[
f\of y\in X\sem\set{\ity_{2}}\sub\t{lim}_{\won}f\bof{\cal F}.
\]
\end{proof}
\begin{claim*}
Assume that $y\nin\lim_{\eta}\cal F$. Then $f\of y\nin\lim_{\won}f\bof{\cal F}$
for some $f\in C.$
\end{claim*}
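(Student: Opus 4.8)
The plan is to feed the failure of $\eta$-convergence into the defining property of paratopologies and then build the required function by deleting the point $y$ from a base of the witnessing filter. Since $y\nin\lim_{\eta}\cal F$ and $\eta$ is a paratopology, there is a countably based filter $\cal H$ on $Y$ with $\cal H\mesh\cal F$ and $y\nin\adh_{\eta}\cal H$. Replacing a countable base by its finite intersections, fix a decreasing base $A_{0}\super A_{1}\super\ds$ of $\cal H$.

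Next I would isolate which value $f_{\cal B}\of y$ must take. For any $\cal B=\set{B_{0},B_{1},\ds}\in\mathbf{B}$ with generated filter $\cal H_{\cal B}$, the convergence $\won$ makes every filter converge to each point other than $\ity_{2}$, so the only chance for $f_{\cal B}\of y\nin\lim_{\won}f_{\cal B}\bof{\cal F}$ is to force $f_{\cal B}\of y=\ity_{2}$. A direct preimage computation gives $f_{\cal B}^{-1}\bof{\set{0,\ds,n}\cup\set{\ity_{1},\ity_{2}}}=Y\sem B_{n+1}$, so $\ity_{2}\in\lim_{\won}f_{\cal B}\bof{\cal F}$ holds exactly when $Y\sem B_{m}\in\cal F$ for some $m$, i.e. exactly when $\cal F$ fails to mesh $\cal H_{\cal B}$. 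Hence it suffices to produce $\cal B\in\mathbf{B}$ whose filter $\cal H_{\cal B}$ satisfies $\cal F\mesh\cal H_{\cal B}$, together with $y\nin\bigcup_{n}B_{n}$ and $y\nin\adh_{\eta}\cal H_{\cal B}$ (these last two conditions forcing $f_{\cal B}\of y=\ity_{2}$).

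The construction is to set $B_{n}:=A_{n}\sem\set y$ and let $\cal H'=\cal H_{\cal B}$ be the generated filter. This keeps the base decreasing and yields $y\nin\bigcup_{n}B_{n}$ immediately; moreover $\cal H'$ refines $\cal H$, so $\adh_{\eta}\cal H'\sub\adh_{\eta}\cal H$ and therefore $y\nin\adh_{\eta}\cal H'$. The main obstacle is to verify that deleting $y$ does not destroy the mesh with $\cal F$. I expect to argue by contradiction: if $\of{A_{n}\sem\set y}\cap F=\emp$ for some $n$ and some $F\in\cal F$, then since $\cal H\mesh\cal F$ forces $A_{n}\cap F\neq\emp$ we must have $A_{n}\cap F=\set y$. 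This singleton then lies in the filter generated by $\cal F\cup\cal H$, which is proper because $\cal H\mesh\cal F$, and a proper filter containing $\set y$ equals the principal ultrafilter $\up{\set y}$. But then $\cal H\sub\up{\set y}$ with $y\in\lim_{\eta}\up{\set y}$, giving $y\in\adh_{\eta}\cal H$, a contradiction.

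With the mesh preserved, $\cal H'$ meets all three requirements, so $f:=f_{\cal B}\in C$ satisfies $f\of y=\ity_{2}$ and, by the preimage computation of the second paragraph, $\ity_{2}\nin\lim_{\won}f\bof{\cal F}$, which is what we wanted. The only genuinely delicate point is the mesh-preservation step; the rest is bookkeeping about the decreasing base and the definition of $\won$.
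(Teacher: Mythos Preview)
Your proof is correct and follows essentially the same route as the paper: invoke the paratopology axiom to obtain a countably based witness $\cal H$ with $\cal F\mesh\cal H$ and $y\nin\adh_{\eta}\cal H$, then arrange a decreasing base avoiding $y$ so that $f_{\cal B}\of y=\ity_{2}$ while the mesh with $\cal F$ forces $\ity_{2}\nin\lim_{\won}f_{\cal B}\bof{\cal F}$. The only cosmetic difference is that the paper first picks $H\in\cal H$ with $y\nin H$ and takes a base contained in $H$ (so the generated filter is still $\cal H$ and no mesh re-check is needed), whereas you delete $y$ from each base element and then verify mesh preservation via the same principal-ultrafilter argument; both packagings rest on the identical observation that $y\nin\adh_{\eta}\cal H$ prevents $\set y$ from lying in any proper filter refining $\cal H$.
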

\begin{proof}
Let $\cal H$ be a countably based filter such that $\cal F\mesh\cal H$
and $y\nin\t{adh}_{\eta}\cal H$. Since $y\nin\t{adh}_{\eta}\cal H$,
there is $H\in\cal H$ with $y\nin H$. Let 
\[
\cal B=\set{B_{0},B_{1},\ds}\in\mathbf{B},
\]
with $B_{0}\super B_{1}\super B_{2}\super\ds$, be a base for $\cal H$
with $B_{0}\sub H$. Since $y\nin B_{0}\cup\t{adh}_{\eta}\cal H$,
it follows that $f_{\cal B}\of y=\ity_{2}$.

It remains to show that $\ity_{2}\nin\lim_{\won}f_{\cal B}\bof{\cal F}$.
Suppose, for a contradiction, that $\ity_{2}\in\lim_{\won}f_{\cal B}\bof{\cal F}$.
Then there is $F\in\cal F$ such that $\ity_{0}\nin f_{\cal B}\of F$
and $f_{\cal B}\of F\cap\gw$ is finite.

Let $B:=\bigcap_{n\in\bn}B_{n}$. Since $\ity_{0}\nin f_{\cal B}\of F$,
it follows that $F\cap B=\empa$. Since $f_{\cal B}\of F\cap\gw$
is finite, there is $n\in\gw$ such that $m\nin f_{\cal B}\of F$
for every $m\ge n$. Since $\cal F\mesh\cal H$, we have $F\cap B_{n}\neq\emp$.
As $F\cap B=\empa$, it follows that there is $m\ge n$ with $F\cap\cur{B_{m}\sem B_{m+1}}\neq\emp$.
Since $m\in f_{\cal B}\of F$ for such $m$, we get a contradiction.
\end{proof}

\section{Proof of Theorem \ref{thm2}.}

We modify the argument from \cite{LoLo-nonsimp}. Assume that for
each cardinal there exists a larger measurable cardinal. A filter
$\cal F$ on a set $X$ is \emph{uniform} iff each member of $\cal F$
has the same cardinality as $X$.
\begin{lem}
\label{lem4}Let $X$ and $Y$ be sets such that $X$ is uncountable
and $\t{card}\,X>\t{card}\,Y$. Then for each uniform countably complete
ultrafilter $\cal U$ on $X$ and each $f:X\to Y$ there exists a
uniform countably complete ultrafilter $\cal W$ on $X$ such that
$\cal W\neq\cal U$ and $f\bof{\cal U}=f\bof{\cal W}$.
\end{lem}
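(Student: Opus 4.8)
The plan is to produce $\cal W$ as the image of $\cal U$ under a carefully chosen bijection of $X$ that respects the fibres of $f$. The point of this strategy is that a bijection automatically preserves every property we need: if $\ph\colon X\to X$ is a bijection with $f\circ\ph=f$, then $\cal W:=\ph\bof{\cal U}$ is again an ultrafilter, it is uniform because $\ph$ preserves cardinalities, it is countably complete because $\ph$ commutes with countable intersections, and $f\bof{\cal W}=\of{f\circ\ph}\bof{\cal U}=f\bof{\cal U}$. Thus the whole lemma reduces to finding a fibre-preserving bijection $\ph$ for which $\ph\bof{\cal U}\neq\cal U$. This sidesteps the naive idea of enlarging the trace filter $\set{f^{-1}\of S:S\in f\bof{\cal U}}$ to a new ultrafilter, which is awkward precisely because an arbitrary ultrafilter extension of a countably complete filter need not itself be countably complete.

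To build $\ph$, I would split each fibre into two halves of equal size. Concretely, for every $y\in Y$ write the fibre $X_y:=f^{-1}\of{\set y}$ as a disjoint union $X_y=A_y\cup B_y\cup P_y$ with $\t{card}\,A_y=\t{card}\,B_y$ and $\t{card}\,P_y\le 1$; this is always possible, with $P_y$ empty unless $X_y$ is finite of odd cardinality. Fix a bijection $\psi_y\colon A_y\to B_y$ and let $\ph$ act on $X_y$ by $\psi_y$ on $A_y$, by $\psi_y^{-1}$ on $B_y$, and as the identity on $P_y$. Assembling these fibrewise involutions yields a fibre-preserving bijection $\ph$ of $X$ that interchanges $A:=\bcup_{y\in Y}A_y$ with $B:=\bcup_{y\in Y}B_y$ and fixes $P:=\bcup_{y\in Y}P_y$ pointwise.

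It then remains to check that $\ph$ actually moves $\cal U$, and this is exactly where the hypotheses $\t{card}\,X>\t{card}\,Y$ and uniformity enter. Since $P$ meets each fibre in at most one point, $f\rs P$ is injective, so $\t{card}\,P\le\t{card}\,Y<\t{card}\,X$; uniformity of $\cal U$ then forces $P\nin\cal U$. Hence $A\cup B=X\sem P\in\cal U$, and since $A$ and $B$ are disjoint with union in $\cal U$, exactly one of them belongs to $\cal U$, say $A$. Then $\ph\bof A=B\nin\cal U$ while $A\in\cal U$, so $\cal W=\ph\bof{\cal U}\neq\cal U$, which is all that was left to verify.

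The only genuinely delicate point is the bookkeeping around the odd-sized fibres: without the leftover set $P$ the fibrewise swap cannot be defined, yet $P$ must be prevented from carrying $\cal U$. The observation that $P$ injects into $Y$, so that uniformity rules out $P\in\cal U$, is the crux, and it is precisely here that the cardinality gap $\t{card}\,X>\t{card}\,Y$ is used.
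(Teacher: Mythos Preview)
Your proof is correct and follows essentially the same strategy as the paper: split each fibre of $f$ into two equinumerous halves, use a fibre-preserving bijection swapping the halves to push $\cal U$ forward, and invoke uniformity together with $\t{card}\,X>\t{card}\,Y$ to see that the leftover piece is too small to belong to $\cal U$. The only cosmetic difference is that the paper lumps all countable fibres into the exceptional set (rather than peeling off a single point from each odd fibre) and defines its bijection $h\colon A\to B$ rather than as a global involution, but the idea is identical.
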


\begin{proof}
Let $\cal P:=\set{f^{-}\of y:y\in f\of X}$ with $\cal P_{0}:=\set{A\in\cal P:\t{card}\,A\le\aleph_{0}}$
and $\cal P_{1}:=\cal P\sem\cal P_{0}$. Note that $\t{card}\,P_{0}<\t{card}\,X$,
where
\[
P_{0}:=\bigcup_{P\in\cal P_{0}}P
\]
and so $\cal P_{1}$ is not empty. 

For each $P\in\cal P_{1}$, let $P=A_{P}\cup B_{P}$ with $A_{P}\cap B_{P}=\emp$
and $\t{card}\,A_{P}=\t{card}\,B_{P}$. Let
\[
A:=P_{0}\cup\bigcup_{P\in\cal P_{1}}A_{P}\qquad\text{and\ensuremath{\qquad}}B:=P_{0}\cup\bigcup_{P\in\cal P_{1}}B_{P}.
\]
Then $A\cup B=X\in\cal U$ and $A\cap B=P_{0}\nin\cal U$ since $\cal U$
is uniform. Thus exactly one of $A$ and $B$ belongs to $\cal U$.
Let $h:A\to B$ be a bijection such that $h\of x=x$ for each $x\in P_{0}$
and $h$ maps $A_{P}$ onto $B_{P}$ for each $P\in\cal P_{1}$. Then
$\cal W:=h\bof{\cal U}$ is a countably complete ultrafilter on $X$
with $\cal U\neq\cal W$ and $f\bof{\cal U}=f\bof{\cal W}$ as required.
\end{proof}
\begin{lem}
Let $X$ be a set of measurable cardinality. Then there exists a uniform
countably complete ultrafilter on $X$.
\end{lem}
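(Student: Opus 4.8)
The plan is to set $\gk:=\t{card}\,X$, which is measurable by hypothesis, and to transport to $X$ the $\gk$-complete free ultrafilter supplied by the definition of measurability. Fix a bijection $g:\gk\to X$ and a $\gk$-complete free ultrafilter $\cal V$ on $\gk$, and put $\cal U:=g\bof{\cal V}$. Since $g$ is a bijection, $\cal U$ is an ultrafilter on $X$ that routinely inherits $\gk$-completeness and freeness from $\cal V$, and a subset of $X$ has cardinality $\gk$ iff its $g$-preimage does. Because a measurable cardinal is uncountable, $\aleph_{0}<\gk$, so $\gk$-completeness gives in particular closure under countable intersections; hence $\cal U$ is countably complete. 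It remains to show that $\cal U$ is uniform, and by the preimage remark it suffices to prove that every member of $\cal V$ has cardinality $\gk$.

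The key point is that a $\gk$-complete free ultrafilter on $\gk$ can have no member of cardinality below $\gk$. Suppose toward a contradiction that $A\in\cal V$ with $\t{card}\,A<\gk$. Since $\cal V$ is free, no singleton lies in $\cal V$, so $\gk\sem\set x\in\cal V$ for every $x\in A$. As there are fewer than $\gk$ such sets, $\gk$-completeness yields
\[
\bcap_{x\in A}\of{\gk\sem\set x}=\gk\sem A\in\cal V.
\]
Then $A$ and $\gk\sem A$ both belong to $\cal V$, so $\emp\in\cal V$, which is absurd. Hence every member of $\cal V$, and therefore every member of $\cal U$, has cardinality $\gk=\t{card}\,X$, establishing uniformity.

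The only real content is this uniformity argument, and even it is short: the crux is the interplay between freeness, which produces the sets $\gk\sem\set x$, and $\gk$-completeness, which is precisely strong enough to intersect fewer than $\gk$ of them and collapse the small set $A$ to a contradiction. Setting up the pushforward along $g$ is routine, so I expect no serious obstacle and a proof of only a few lines.
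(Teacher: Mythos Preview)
Your argument is correct and follows essentially the same route as the paper: both establish uniformity by noting that freeness gives $X\sem\set x\in\cal U$ for each $x$, and $\gk$-completeness then forces $X\sem A\in\cal U$ for any $A$ of cardinality below $\gk$. The only difference is cosmetic: the paper takes the $\gk$-complete free ultrafilter directly on $X$ (tacitly identifying $X$ with $\gk$), whereas you make the bijection $g:\gk\to X$ and the pushforward explicit, and you spell out that $\gk>\aleph_{0}$ yields countable completeness.
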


\begin{proof}
Let $\gk:=\t{card}\,X$. Since $\gk$ is measurable, there exists
a free $\gk$-complete ultrafilter $\cal U$ on $X$. Since $\cal U$
is free, $X\sem\set x\in\cal U$ for every $x\in X$. Since $\cal U$
is $\gk$-complete, if $A\sub X$ has cardinality smaller than $\gk$,
then 
\[
X\sem A=\bigcap_{x\in A}\cur{X\sem\set x}\in\cal U,
\]
so $A\nin\cal U$. Thus $\cal U$ is uniform.
\end{proof}
\begin{lem}
\label{lem6}Let $\cal U$ be a uniform countably complete ultrafilter
on an uncountable set $X$ and $x_{\ity}\in X$. Define a convergence
$\xi=\xi\of{\cal U,x_{\ity}}$ on $X$ by $x\in\lim_{\xi}\cal F$
iff $\cal F=\up{\set x}$ for $x\in X\sem\set{x_{\ity}}$ and $x_{\ity}\in\lim_{\xi}\cal F$
iff $\bigcap\cal F\sub\set{x_{\ity}}$ and $\cal F\nsub\cal U$. Then
$\xi$ is a hypotopology.
\end{lem}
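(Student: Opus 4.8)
The plan is to verify directly that $\xi$ is $\bh$-adherence-determined, where $\bh$ is the class of all countably complete filters. The defining implication to establish is: if $x\in\adh_{\xi}\cal H$ for every countably complete filter $\cal H$ on $X$ with $\cal H\mesh\cal F$, then $x\in\lim_{\xi}\cal F$. Its converse is automatic for any convergence (if $x\in\lim_{\xi}\cal F$ and $\cal H\mesh\cal F$, then $\cal F\vee\cal H$ is a proper filter refining $\cal H$ that still converges to $x$ by monotonicity), so I would only prove the contrapositive: fixing $\cal F$ and $x$ with $x\nin\lim_{\xi}\cal F$, I would exhibit a single countably complete filter $\cal H$ on $X$ with $\cal H\mesh\cal F$ and $x\nin\adh_{\xi}\cal H$. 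Two preliminary observations drive everything. First, for $x\neq x_{\ity}$ the only filter converging to $x$ is $\up{\set x}$, and it refines $\cal H$ exactly when every member of $\cal H$ contains $x$; hence $x\in\adh_{\xi}\cal H$ iff $x\in\bigcap\cal H$. Second, an ultrafilter has no proper refinement, so the only filter refining an ultrafilter is that ultrafilter itself; this will kill the adherence in several subcases at once.

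For $x\neq x_{\ity}$, the hypothesis $x\nin\lim_{\xi}\cal F$ means $\cal F\neq\up{\set x}$. Since $\up{\set x}$ is an ultrafilter, $\cal F$ cannot refine it, so there is $A\ni x$ with $A\nin\cal F$. As $\cal F$ is a filter, no member of $\cal F$ is contained in $A$, i.e. $X\sem A$ meets every member of $\cal F$. Therefore the principal filter $\cal H:=\up{\of{X\sem A}}$, which is automatically countably complete, meshes $\cal F$, while $\bigcap\cal H=X\sem A$ does not contain $x$; by the first observation $x\nin\adh_{\xi}\cal H$. This settles the case $x\neq x_{\ity}$.

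For $x=x_{\ity}$, the negation of $x_{\ity}\in\lim_{\xi}\cal F$ reads: either $\bigcap\cal F\nsub\set{x_{\ity}}$ or $\cal F\sub\cal U$. In the first case I would pick $z\in\bigcap\cal F$ with $z\neq x_{\ity}$ and take $\cal H:=\up{\set z}$; it meshes $\cal F$ because $z$ lies in every member of $\cal F$, and its only refinement is itself, which converges solely to $z\neq x_{\ity}$, so $x_{\ity}\nin\adh_{\xi}\cal H$. In the second case the witness is $\cal U$ itself: it is countably complete by hypothesis, it meshes $\cal F$ since $\cal F\sub\cal U$ and $\cal U$ is proper, and, being an ultrafilter, its only refinement is $\cal U$; but $\cal U\sub\cal U$ violates the clause $\cal G\nsub\cal U$ required for $x_{\ity}\in\lim_{\xi}\cal G$, so $x_{\ity}\nin\lim_{\xi}\cal U$ and hence $x_{\ity}\nin\adh_{\xi}\cal U$. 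These subcases are exhaustive, completing the contrapositive and showing $\xi$ is a hypotopology.

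The only genuinely delicate point is this last subcase, and it is exactly where the countable completeness of $\cal U$ is needed (to place $\cal U$ in $\bh$). The mechanism is that the ``$\cal G\nsub\cal U$'' clause was built into the definition of $\xi$ precisely so that $\cal U$ — whose sole refinement is itself — fails to have $x_{\ity}$ in its adherence, yet still meshes every $\cal F\sub\cal U$. Everything else I expect to be routine once the adherence of a filter at an ordinary point is identified with $\bigcap$ of the filter and once one uses that ultrafilters admit no proper refinements.
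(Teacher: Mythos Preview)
Your proof is correct and follows essentially the same approach as the paper: you argue by contrapositive, split into the cases $x\neq x_{\infty}$ and $x=x_{\infty}$ (the latter into the two subcases $\bigcap\cal F\nsubseteq\{x_{\infty}\}$ and $\cal F\subseteq\cal U$), and in each case exhibit exactly the same countably complete witness $\cal H$ as the paper does, namely $\up{(X\setminus A)}$, $\up{\{z\}}$, and $\cal U$ respectively. Your write-up simply supplies more detail in verifying that each $\cal H$ meshes $\cal F$ and that $x\notin\adh_{\xi}\cal H$.
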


\begin{proof}
Let $\cal F$ be a filter on $X$ and $x\in X\sem\lim_{\xi}\cal F$.
We need to find a countably complete filter $\cal H$ on $X$ such
that $\cal H\mesh\cal F$ but $x\nin\adh_{\xi}\cal H$. If $x\neq x_{\ity}$,
then $\cal F\neq\up{\set x}$. Taking any $A\in\up{\set x}\sem\cal F$
makes $\cal H:=\up{\cur{X\sem A}}$ to be as required.

Assume $x=x_{\ity}$. If there is $y\in\bigcap\cal F$ with $y\neq x$,
then $\cal H:=\up{\set y}$ satisfies the requirements. If $\bigcap\cal F\sub\set x$,
then $\cal F\sub\cal U$ and $\cal H:=\cal U$ satisfies the requirements.
\end{proof}

\paragraph{Proof of Theorem \ref{thm2}.}

Let $\eta$ be a convergence on a set $Y$. Let $X$ be such that
$\t{card}\,X$ is a measurable cardinal with $\t{card}\,X>\t{card}\,Y$
and let $\cal U$ be a uniform countably complete ultrafilter on $X$.
Let $\xi:=\xi\of{\cal U,x_{\ity}}$ be the hypotopology on $X$ as
in Lemma \ref{lem6} for some $x_{\ity}\in X$. By Lemma \ref{lem4},
for every map $f\in Y^{X}$ there is a uniform countably complete
ultrafilter $\cal W_{f}$ on $X$ with $\cal W_{f}\neq\cal U$ and
$f\bof{\cal U}=f\bof{\cal W_{f}}$. Then $\lim_{\xi}\cal U=\emp$
and $x_{\ity}\in\lim_{\xi}\cal W_{f}$ for each $f\in Y^{X}$. Thus
\[
\xi\neq\bigcap_{f\in\sc\cur{\xi,\tau}}f^{-}\eta.
\]

\section{Proof of Theorem \ref{thm4}.}

Let $X_{n}$ be a countably infinite set for each $n\in\gw$ with
$X_{n}\cap X_{m}=\empa$ whenever $n\neq m$. Let $y_{0},y_{1},\ds$
be distinct with $\set{y_{0},y_{1},\ds}\cap X_{n}=\empa$ for each
$n\in\gw$. Let 
\[
z\nin\set{y_{0},y_{1},\ds}\cup\bigcup_{n\in\gw}X_{n}
\]
and
\[
X:=\bigcup_{n\in\gw}X_{n}\cup\set{y_{0},y_{1},\ds}\cup\set z.
\]
We define a Hausdorff pseudotopology $\xi$ on $X$ as follows. If
$x\in\bigcup_{n\in\gw}X_{n}$, then the principal ultrafilter $\up{\set x}$
is the only filter on $X$ that converge to $x$. A free ultrafilter
$\cal U$ on $X$ converges to $y_{n}$ for $n\in\gw$ iff $\cal U$
refines the cofinite filter on $X_{n}$. A free ultrafilter $\cal U$
on $X$ converges to $z$ iff $\cal U$ refines the cofinite filter
on $\set{y_{0},y_{1},\ds}$ or there exists a sequence $x_{0},x_{1},\ds$
with $x_{n}\in X_{n}$ for each $n\in\gw$ and $\cal U$ refines the
cofinite filter on $\set{x_{0},x_{1},\ds}$.
\begin{claim*}
The convergence $\xi$ is subdiagonal.
\end{claim*}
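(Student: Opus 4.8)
The plan is to compute the set adherence operator $\adh_\xi$ explicitly at every point and then verify idempotence by a direct set-theoretic comparison. Because $\xi$ is a pseudotopology, convergence is determined by ultrafilters, so $x\in\adh_\xi A$ holds if and only if some ultrafilter $\cal U$ with $A\in\cal U$ satisfies $x\in\lim_\xi\cal U$. First I would evaluate this condition at the three kinds of points. For $x\in\bigcup_{n\in\gw}X_{n}$ the only convergent ultrafilter is $\up{\set x}$, so $x\in\adh_\xi A$ iff $x\in A$. For $x=y_{n}$, the ultrafilters converging to $y_n$ are $\up{\set{y_n}}$ together with the free ultrafilters refining the cofinite filter on $X_{n}$, and such a free ultrafilter can be made to contain $A$ exactly when $A\cap X_{n}$ is infinite; hence $y_n\in\adh_\xi A$ iff $y_n\in A$ or $A\cap X_{n}$ is infinite.

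The subtle case is $x=z$. Writing $Y:=\set{y_{0},y_{1},\ds}$, a free ultrafilter converging to $z$ refines either the cofinite filter on $Y$ or the cofinite filter on some transversal $\set{x_{0},x_{1},\ds}$ with $x_{n}\in X_{n}$. A free ultrafilter of the first kind containing $A$ exists iff $A\cap Y$ is infinite. For the second kind I would observe that a transversal whose intersection with $A$ is infinite can be chosen precisely when $A$ meets infinitely many of the sets $X_{n}$ (select a point of $A\cap X_{n}$ whenever that set is nonempty); together with the cofinite filter on that transversal, $A$ then generates a free ultrafilter converging to $z$. This yields: $z\in\adh_\xi A$ iff $z\in A$, or $A\cap Y$ is infinite, or $A\cap X_{n}\neq\emp$ for infinitely many $n$.

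With these formulas established, idempotence is a routine check. Set $B:=\adh_\xi A$. Each of the three characterizations gives $A\sub\adh_\xi A$, so extensivity together with monotonicity of set adherence yields $B\sub\adh_\xi B$; it remains to prove $\adh_\xi B\sub B$. The point is that $B\cap X_{n}=A\cap X_{n}$ for every $n$, since membership in $X_{n}$ is governed only by $A$. Substituting this into the formula at $y_n$ shows that $y_n\in\adh_\xi B$ forces $y_n\in B$, and at a point of $X_n$ the condition $x\in\adh_\xi B$ is just $x\in B$. At $z$ the condition that $B$ meets infinitely many $X_{n}$ is identical to the corresponding condition for $A$, while the condition that $B\cap Y$ is infinite splits, by pigeonhole, into either infinitely many $y_n\in A$ (so $A\cap Y$ is infinite) or infinitely many $n$ with $A\cap X_{n}$ infinite (so $A$ meets infinitely many $X_{n}$); in every case $z\in B$.

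I expect the main obstacle to be the analysis at $z$: establishing that ``meeting infinitely many $X_{n}$'' is exactly the combinatorial condition under which $A$ can be placed inside a free ultrafilter built along a choice sequence, and then checking in the idempotence step that the disjunction defining $z\in\adh_\xi B$ collapses back, with no new route for $z$ to adhere. Everything else is a mechanical transcription of the definition of $\adh_\xi$.
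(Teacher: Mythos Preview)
Your proposal is correct and follows essentially the same route as the paper: a case analysis by point type, using the key identity $\adh_\xi A\cap X_n=A\cap X_n$ and the resulting characterization of $\adh_\xi$ at $y_n$ and at $z$ to collapse $\adh_\xi(\adh_\xi A)$ back to $\adh_\xi A$. Your treatment is in fact more explicit at $z$ (the pigeonhole split of the condition ``$B\cap Y$ infinite'' into the two disjuncts for $A$) than the paper's terser argument, but the underlying idea is identical.
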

\begin{proof}
Let $A\sub X$ and $x\in X$ be such that $x\in\t{adh_{\xi}}\cur{\t{adh}_{\xi}A}$.
If $x\in\bigcup_{n\in\gw}X_{n}$, then $x\in\t{adh}_{\xi}A$. 

Assume that $x=y_{n}$ for some $n\in\gw.$ Since $x\in\t{adh_{\xi}}\cur{\t{adh}_{\xi}A}$,
it follows that $x\in\adh_{\xi}A$ or $X_{n}\cap\adh_{\xi}A$ is infinite.
Note that 
\[
X_{n}\cap\adh_{\xi}A=X_{n}\cap A,
\]
and that if $X_{n}\cap A$ is infinite, then $x\in\adh_{\xi}A$. Therefore
$x\in\adh_{\xi}A$.

Assume that $x=z$ and $x\nin\t{adh}_{\xi}A$. Then the set $\set{n\in\gw:A\cap X_{n}\neq\empa}$
is finite, implying that the set $\set{n\in\gw:y_{n}\in\t{adh}_{\xi}A}$
is also finite which contradicts $x\in\t{adh_{\xi}}\cur{\t{adh}_{\xi}A}$.
\end{proof}
\begin{claim*}
The convergence $\xi$ is not weakly diagonal.
\end{claim*}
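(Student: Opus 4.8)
The plan is to refute weak diagonality by exhibiting a single filter $\cal F$ on $X$ whose adherence is not closed, i.e. with $\adh_\xi\of{\adh_\xi\cal F}\neq\adh_\xi\cal F$; by the Lowen--Colebunders characterization quoted in the introduction this shows $\xi$ is not weakly diagonal. Since $A\sub\adh_\xi A$ for every set $A$ (the principal ultrafilter at a point of $A$ witnesses adherence), we always have $\adh_\xi\cal F\sub\adh_\xi\of{\adh_\xi\cal F}$, so it suffices to produce a point of $\adh_\xi\of{\adh_\xi\cal F}\sem\adh_\xi\cal F$. The natural candidate is $z$.

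First I would record the relevant readings of adherence. Since $\xi$ is a pseudotopology, $x\in\adh_\xi\cal G$ exactly when some ultrafilter refining $\cal G$ converges to $x$. Reading off the convergences of $\xi$ then gives, for a set $B\sub X$, that $z\in\adh_\xi B$ iff $z\in B$, or $B$ meets $\set{y_0,y_1,\ds}$ in an infinite set, or $B$ meets infinitely many of the $X_n$. It also supplies the sufficient condition I will use for the $y_n$: if every member of a filter $\cal G$ meets $X_n$ in an infinite set, then the cofinite filter on $X_n$ meshes with $\cal G$, any common refining ultrafilter is free and converges to $y_n$, whence $y_n\in\adh_\xi\cal G$.

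The filter I would use is
\[
\cal F:=\set{F\sub X:\sup_{n\in\gw}\abv{X_{n}\sem F}<\ity},
\]
the sets omitting only a uniformly bounded number of points from each column $X_n$; a routine check shows $\cal F$ is a proper filter, and it contains $X^{*}:=\bigcup_{n\in\gw}X_{n}$. Every $F\in\cal F$ meets each $X_n$ cofinitely, so by the sufficient condition above $y_n\in\adh_\xi\cal F$ for every $n$, and hence $\set{y_0,y_1,\ds}\sub\adh_\xi\cal F$. To see that $z\nin\adh_\xi\cal F$ I would rule out the three families of ultrafilters that can converge to $z$. The principal ultrafilter at $z$ and the free ultrafilters refining the cofinite filter on $\set{y_0,y_1,\ds}$ are excluded at once, because $X^{*}\in\cal F$ is disjoint from $\set z$ and from $\set{y_0,y_1,\ds}$. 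The remaining, and essential, case is a free ultrafilter refining the cofinite filter on a transversal $T=\set{x_0,x_1,\ds}$ with $x_n\in X_n$: here $X^{*}\sem T$ misses exactly one point of each column, so $X^{*}\sem T\in\cal F$, and being disjoint from $T$ it cannot belong to any ultrafilter refining the cofinite filter on $T$. Thus no ultrafilter refining $\cal F$ converges to $z$, i.e. $z\nin\adh_\xi\cal F$.

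Finally, since $\set{y_0,y_1,\ds}\sub\adh_\xi\cal F$, the set $B:=\adh_\xi\cal F$ meets $\set{y_0,y_1,\ds}$ in an infinite set, so the reading of set-adherence above yields $z\in\adh_\xi B=\adh_\xi\of{\adh_\xi\cal F}$. Combined with $z\nin\adh_\xi\cal F$ this gives $\adh_\xi\of{\adh_\xi\cal F}\neq\adh_\xi\cal F$, so $\cal F$ has no closed adherence and $\xi$ is not weakly diagonal. I expect the crux to be the design of $\cal F$: it must be large enough in every column to force all of $y_0,y_1,\ds$ into its adherence, yet it must avoid $z$, whose adherence is triggered by a single transversal, including ``unbounded'' ones. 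Naive candidates such as the cofinite filter on $X^{*}$ fail precisely here, since they mesh with every transversal; the uniform column-deficiency filter is what meets both demands simultaneously.
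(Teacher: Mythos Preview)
Your proof is correct and follows the same approach as the paper: exhibit a filter whose adherence contains every $y_n$ but not $z$, so that $z$ witnesses $\adh_\xi\of{\adh_\xi\cal F}\neq\adh_\xi\cal F$. The paper's choice is the slightly coarser filter $\cal F=\set{F\sub X:X_n\sem F\text{ is finite for each }n\in\gw}$, and the identical complement-of-a-transversal argument (namely $\bigcup_{n\in\gw}\cur{X_n\sem\set{x_n}}\in\cal F\sem\cal U$) goes through, so the uniform bound you impose is not actually needed.
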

\begin{proof}
Let $\cal F$ be the free filter on $X$ such that for $A\sub X$
we have $A\in\cal F$ iff $X_{n}\sem A$ finite for every $n\in\gw$.
We show that 
\[
z\in\t{adh}_{\xi}\cur{\t{adh}_{\xi}\cal F}\sem\t{adh}_{\xi}\cal F.
\]
Indeed, $z\in\t{adh}_{\xi}\cur{\t{adh}_{\xi}\cal F}$ since $y_{n}\in\t{adh_{\xi}\cal F}$
for each $n\in\gw$ and $z\in\t{adh}_{\xi}\set{y_{0},y_{1},\ds}$.
However, $z\nin\t{adh_{\xi}\cal F}$ since for every ultrafilter $\cal U$
that converges to $z$ either $\set{y_{0},y_{1},\ds}\in\cal U$ or
there is a sequence $x_{0},x_{1},\ds$ with $x_{n}\in X_{n}$ for
every $n\in\gw$ and $\set{x_{0},x_{1},\ds}\in\cal U$. If the former
holds, then $\cal U$ does not refine $\cal F$. If the latter holds,
then $\cal U$ does not refine $\cal F$ either since 
\[
\bigcup_{n\in\gw}\cur{X_{n}\sem\set{x_{n}}}\in\cal F\sem\cal U.
\]
Thus $\xi$ is not weakly diagonal.
\end{proof}

\section*{Acknowledgment}

I am very grateful to Szymon Dolecki (Institut de Mathématiques de
Bourgogne) for making me interested in convergence theory. I am also
thankful to both Szymon and Frédéric Mynard (New Jersey City University)
for very helpful remarks on the first draft of this paper and to the
anonymous referee whose remarks allowed me to make some necessary
corrections.

\end{document}